\numberwithin{equation}{section}
\newtheorem{theorem}{Theorem}[section]
\theoremstyle{definition}
\newtheorem{definition}[theorem]{Definition}
\theoremstyle{remark}
\newtheorem{remark}[theorem]{Remark}
\newcommand{\Div}{\operatorname{div}}
\newcommand{\Grad}{\nabla}
\newcommand{\vr}{\varrho}
\newcommand{\vc}[1]{{\bm{#1}}}
\newcommand{\R}{\mathbb{R}}
\newcommand{\e}{\varepsilon}
\begin{document}

\title[On a tumor growth model] {On the vanishing viscosity approximation of a nonlinear model for tumor growth}

\author[Donatelli]{Donatella Donatelli}
\address[Donatelli]{\newline
Departement of Engineering Computer Science and Mathematics\\
University of L'Aquila\\
67100 L'Aquila, Italy.}
\email[]{\href{donatella.donatelli@univaq.it}{donatella.donatelli@univaq.it}}
\urladdr{\href{http://univaq.it/~donatell}{univaq.it/\~{}donatell}}

\author[Trivisa]{Konstantina Trivisa}
\address[Trivisa]{\newline
Department of Mathematics \\ University of Maryland \\ College Park, MD 20742-4015, USA.}
\email[]{\href{http://www.math.umd.edu}{trivisa@math.umd.edu}}
\urladdr{\href{http://www.math.umd.edu/~trivisa}{math.umd.edu/\~{}trivisa}}

\date{\today}

\subjclass[2010]{Primary: 35Q30, 76N10; Secondary: 46E35.}

\keywords{Tumor growth models, cancer progression, mixed models, moving domain, penalization, existence.}

\thanks{}

\maketitle

\begin{abstract}
We investigate the dynamics of a nonlinear system modeling tumor growth with drug application. The tumor is viewed as a mixture consisting of proliferating, quiescent and dead cells as well as a nutrient  in the presence of a drug. 
The system is given by a multi-phase flow model: the densities of the different cells are governed by a set of  transport equations, the density of the nutrient and the density of the drug are governed by rather general diffusion equations, while the velocity of the tumor is given by Darcy's equation. The domain occupied by the tumor in this setting is a growing continuum  $\Omega$ with boundary $\partial \Omega$  both of which  evolve in time. Global-in-time weak solutions are obtained using an  approach based on the vanishing viscosity of the Brinkman's regularization. Both the solutions and the domain are rather general, no symmetry assumption is required and the result holds for large initial data. 
\end{abstract}

\tableofcontents{}

\newpage

\section{Introduction}\label{S1}

\subsection{Motivation}
In recent years, there has been an increased interest in the mathematical modeling and numerical simulation of tumor growth to complement experimental and clinical studies and thereby improve the understanding of cancer progression. 
Mathematical models describing continuum cell populations and their development typically consider the interactions between the cell number densities and one or more chemical species that provide nutrients and drug or influence the cell cycle events of a tumor cell population.

In this work we investigate the dynamics of a nonlinear system describing the evolution of cancerous cells.  The tumor is viewed as a {\em multiphase flow} consisting of proliferating cells, quiescent cells and dead cells (also known as extra-cellular cells) in the presence of a nutrient (oxygen) and drug. Here, and in what follows, we denote by $P, Q$ and $D$ the densities of proliferating, quiescent and dead cells respectively, and by $C$ and $W$ the nutrient and drug concentrations.

The mathematical model under consideration  is governed by  a system of transport equations for the evolution of cancerous cells; two rather general diffusion equations which are used to describe the diffusion  of the nutrient (oxygen) within the tumor region and  the evolution of the drug within the same regime and  the Darcy law,  which determines the velocity field. The  continuous movement within the tumor region is  due to proliferation, mitosis, apoptosis or removal of cells. 

\subsection{Biological principles}
Our model is based on  the following biological principles (cf. Roda {\em et al.}  \cite{Roda-etal-2011, Roda-etal-2012}, Friedman {\em et al.} \cite{Friedman-2004}, \cite{ChenFriedman-2013}, Zhao  \cite{Zhao-2010}): 

\begin{enumerate}[\quad$\bullet$]
\item Living cells are either in a   {\em proliferating phase} or in a  {\em quiescent phase}.

\item Proliferating cells die as a result of {\em apoptosis,} which is a cell-loss mechanism. Quiescent cells die in part due to {\em apoptosis} and more often  due to starvation. In fact the proliferation and the necrotic death rates of tumor cells depend on the oxygen level.
\item The dead tumor cells are obtained from necrosis and apoptosis of live tumor cells, and they are cleared by macrophages.

\item Living cells undergo {\em mitosis}, a process that takes place in the nucleus of a dividing cell. 
\item Cells change from quiescent phase into proliferating phase at a rate which increases with the nutrient level, and they die at a rate which increases as the level of nutrient (oxygen)  decreases.
\item  Proliferating cells become quiescent   and die at a rate which increases as the nutrient concentration decreases. The proliferation rate increases with the nutrient concentration.
\item Proliferating cells and quiescent cells become dead cells at a rate which depends on the drug concentration.
\end{enumerate}
  
We denote by $\Omega_t :=\Omega(t)$  the tumor region and its boundary  $\partial\Omega_t$ evolves with respect to time. Both live and dead tumor cells are assumed to be in the tumor region $\Omega_t$. Abnormal proliferation of tumor cells generates internal pressure in $\Omega(t)$, resulting to a velocity field $\vc{v} \not= 0$.




\subsection{Governing equations of cells, oxygen and drug}
\subsubsection{Transport equations for the evolution of the cell densities}
All the cells are assumed to follow the general continuity equation:
\begin{equation}
\frac{\partial n}{\partial t} + \Div_x (n  {\vc{v}}) = G(n), \nonumber
\end{equation}
where $n$ may represent  densities of proliferating/quiescent and dead cells. The function $G$ includes in general proliferation, apoptosis or clearance of cells, and chemotaxis terms as appropriate.

The change of phase of the cancerous cells generates a continuous movement 
within the tumor represented by a velocity field ${\vc{v}}$. 

The rates of change from one phase to another are functions of the nutrient concentration $C$.
\begin{enumerate}[\quad$\bullet$]
\item $K_Q (C)$ denotes the rate of change of phase from $ P \to Q;$
\item $K_P(C)$ denotes the rate of change  from $Q \to P;$
while 
\item $K_A  (C)$ and $K_D (C)$ denote the change of phases from $ P \to D$ and 
$Q \to D$ respectively.
\end{enumerate}
Here,  $K_A$ stands for apoptosis, whereas dead cells are removed at rate $K_R$ (independent of $C$), and the rate of cell proliferation (new births) is $K_B©.$ 
\smallskip

\subsubsection{The tumor tissue as a porous medium}
Due to proliferation and removal of cells there is continuous motion of cells within the tumor; this movement is represented by the velocity field $\vc{v}$ given by the Darcy's equation
\begin{equation}
\Grad_x \sigma = - \frac{\tilde{\mu}}{K} \vc{v}.\label{pressure2}
\end{equation}

\noindent
where $\sigma$ denotes the pressure, $\tilde{\mu}$  is a positive constant describing the viscous like properties of tumor cells, whereas $K$ denotes the permeability. 
In the present context,  \eqref{pressure2} accounts for  the friction of the tumor cells
with the extracellular matrix

The mass conservation laws for the densities of the proliferative cells $P,$ quiescent cells $Q$ and dead cells $D$  in $\Omega(t)$ 
take the following form:
\begin{equation}
 \frac{\partial P}{\partial t} + \Div (P \vc{v}) = \vc{G_P}, \label{dP}
 \end{equation}
 \begin{equation}
 \frac{\partial Q}{\partial t} + \Div (Q \vc{v}) = \vc{G_Q}, \label{dQ}
\end{equation}
\begin{equation}
\frac{\partial D}{\partial t} + \Div (D \vc{v}) = \vc{G_D}. \label{dD}
\end{equation}
Following Friedman \cite{Friedman-2004}, the source terms ${\bf \{G_P, G_Q, G_D\}}$ are of the following form:
\begin{equation}\label{Gp}
 \vc{G_P} =  \left(K_B C - K_Q (\bar C- C) - K_A(\bar C - C)\right) P + K_P C Q - i_1 G_1(W) P, 
\end{equation}
where  $G_1(\cdot)$  a  smooth function and $K_{B}$, $K_{Q}$, $K_{A}$ are positive constants. The first term in this equation accounts for the increase of the number of cells due to new births, loss due to change of phase from proliferating to quiescent and loss due to apoptosis. The second term reflects the increase of the number of proliferating cells generated from quiescent cells, whereas the third term accounts for the decrease of the number of cells due to death resulting from the  effect of drug. In an analogous fashion
\begin{equation}\label{Gq}
 \vc{G_Q} = K_Q (\bar C- C)P - \left(K_P C + K_D(\bar C - C)\right) Q - i_2 G_2(W) Q,
\end{equation}
with $G_2(\cdot)$ a  smooth function and $K_{P}$, $K_{Q}$, $K_{D}$  positive constants. In the above relations \eqref{Gp}-\eqref{Gq} $i_1 G_1(W)$ and   $i_2 G_2(W)$ denote the rates by which  the proliferating cells and the quiescent  cells  become dead cells due to the drug. Finally,
\begin{equation}\label{Gd}
\vc{G_D} =  K_A (\bar C- C)P  + K_D (\bar C-C) Q - K_R D + i_1G_1(W)P + i_2 G_2(W)Q.
\end{equation}

\smallskip
\subsubsection{A linear diffusion equation for the evolution of nutrient}

Tumor cells consume nutrients (oxygen). In contrast to the equations of cell densities, the equations of the oxygen molecules in the tumor include diffusion terms in the following form:

\begin{equation}
\frac{\partial C}{\partial t} = \Grad \cdot (\nu_1 \Grad C) -  \left(K_1 K_P CP + K_2 K_Q(\bar{C}-C)Q\right)C. \nonumber
\end{equation}
Assuming that $\nu_1$ is constant this equation (cf. Friedman \cite{Friedman-2004}) becomes 
\begin{equation}\label{dC}
\frac{\partial C}{\partial t} = \nu_1 \Delta C -  \left(K_1 K_P CP + K_2 K_Q(\bar{C}-C)Q\right)C.
\end{equation}
This equation describes the diffusion of the oxygen in the tumor region. According to (cf. Ward and King \cite{WardKing-1997}, \cite{WardKing-2003}) the nutrient is consumed at a rate proportional to the rate of cell mitosis, namely  the  second term 
  on the right-hand side of  the first equation in \eqref{dC}.
  
  \subsubsection{A linear diffusion equation for the evolution of drug}
 The evolution of the drug concentration in the tumor is given by a diffusion equation of the form
  \begin{equation}
\frac{\partial W}{\partial t} = \Grad \cdot (\nu_2 \Grad W) -  \left(\mu_1 G_1(W)P + \mu_2  G_2(W) Q\right) W,\nonumber
\end{equation}
with $G_1(\cdot), G_2(\cdot)$ smooth functions.

Assuming that $\nu_2$ is constant this equation (cf. Zhao \cite{Zhao-2010}) becomes 
\begin{equation}\label{dW}
\frac{\partial W}{\partial t} = \nu_2 \Delta W -  \left(\mu_1 G_1(W) P + \mu_2 G_2(W) Q \right)W.
\end{equation}
This equation describes the diffusion of the drug within the tumor region. The second term of the right-hand side of \eqref{dW} represents the drug consumption, the constants $\mu_1, \mu_2$ are two positive constants which can be viewed as a measure of the drug effectiveness.

\smallskip

The total density of the mixture is denoted by $\vr_f$ and is given by\\
\begin{equation}
 \vr_f = P+Q+D = Constant.
\label{density}
\end{equation}

Adding \eqref{dP}-\eqref{dD} and taking into consideration \eqref{density} we arrive at the following relation, which represents an additional constraint\\
 \begin{align}
\rho_{f}\Div{\vc{v}}= \vc{G_P} + \vc{G_Q}+\vc{G_D}
=
K_{B} C P-K_{R}D.
\label{divcon}
\end{align}
Our aim is to study the system \eqref{pressure2}-\eqref{divcon} in a spatial domain $\Omega_t$, with a boundary $\Gamma =\partial\Omega_t$ varying in time. 


\subsubsection{Boundary behavior}
The boundary of the domain $\Omega_t$ occupied by the tumor is described by means of a given velocity $\vc{V}(t, x),$ where $t \ge 0$ and $x \in \R^3.$ More precisely, assuming $\vc{V}$ is regular, we solve the associated system of differential equations
\begin{equation}
\frac{d}{dt} \vc{X}(t, x) = \vc{V}(t, \vc{X}(t,x)), \,\, t > 0, \,\, \vc{X}(0,x) = x, \nonumber
\end{equation}
and set
\begin{equation}
\begin{cases}
\!\!\! \! & \Omega_{\tau} = \vc{X}(\tau, \Omega_0), \,\, \mbox{where}\,\, \Omega_0 \subset \R^3\,\, \mbox{is a given domain,}\\
\!\! \!\! & \Gamma_{\tau} = \partial \Omega_{\tau}, \,\, \mbox{and} \,\, Q_{\tau} = \left\{(t,x) | t \in (0,\tau), x\in  \Omega_{\tau}\right\}.
\end{cases}\nonumber
\end{equation}
Moreover, we assume that   
\begin{equation}
\Div_x \vc{V}(\tau, \cdot) = 0,  \label{incomp-bc}
\end{equation}
which by the transport theorem yields
$$ |\Omega_\tau| = |\Omega_0| \,\,\, \mbox{for any}\,\,\, \tau \ge 0.$$


 The model is closed by giving boundary conditions on the (moving) tumor boundary $\Gamma_{\tau}.$ 
More precisely, we assume that the boundary $\Gamma_{\tau}$ is impermeable,  meaning
\begin{equation}
 (\vc{v} - \vc{V}) \cdot \vc{n}|_{\Gamma_{\tau}} = 0, \,\, \mbox{for any}\,\,\, \tau \ge 0. \label{BC1}
 \end{equation}
In addition, for {\em viscous} fluids, Navier proposed the boundary condition of the form
\begin{equation}
[\mathbb{S} \vc{n}]_{\mbox{tan}}|_{\Gamma_{\tau}} = 0, \label{BC2}
\end{equation}
with $\mathbb{S}$ denoting the viscous stress tensor which in this context is assumed to be determined through Newton's rheological law
$$
\mathbb{S} = \mu \Big( \Grad \vc{v} + \Grad^{\perp} \vc{v} - {2 \over 3} \Div
\vc{v} \mathbb{I} \Big) + \xi \Div \vc{v} \mathbb{I},
$$
where $\mu> 0$, $\xi \geq 0$ are respectively the shear and bulk
viscosity coefficients. Condition \eqref{BC2} namely says that the tangential component of the normal 
viscous stress vanishes on $\Gamma_{\tau}.$
\smallskip

  
The concentrations of the nutrient and the drug on the boundary satisfy the conditions:
\begin{equation}
C(x,t)|_{\Gamma_t} = 0,\,\,\, W(x,t)|_{\Gamma_t} = 0 \label{BC3}.
\end{equation} 
In contrast to the case of {\em avascular tumors} where the nutrient typically diffuses within the tumor region through the boundary, here we assume that the diffusion of the nutrient  occurs through the vessels present in the area. 

Finally, the problem \eqref{dP}-\eqref{BC3} is supplemented by the initial conditions
\begin{equation}
\begin{cases}
& \!\!\!\!  P(0, \cdot) = P_0, \,\, Q(0, \cdot) = Q_0, \,\, D(0, \cdot) = D_0, \\
& \!\!\!\!  C(0, \cdot) = C_{0}\leq \bar{C}, \,\, W(0, \cdot) = W_{0} \,\,\, \text{in} \,\,\, \Omega_0.
\end{cases}
 \label{IC}
\end{equation}
The aim of this work is the establishment of the global existence of weak solutions to 
the nonlinear system \eqref{pressure2}-\eqref{dD}, \eqref{dC}-\eqref{dW}
for finite large initial data.

Related results on the mathematical analysis of cancer  models have been presented by Zhao \cite{Zhao-2010} based on the framework introduced  by Friedman {\em et al.}  \cite{Friedman-2004}, \cite{ChenFriedman-2013}. The analysis in \cite{Friedman-2004}, \cite{ChenFriedman-2013} yields existence and uniqueness of solution  to a related model in the radial symmetric case for a small time interval $[0,T].$ The analysis in \cite{Zhao-2010} treats a parabolic-hyperbolic free boundary problem and provides a unique global solution in the radially symmetric case. 
In  \cite{DT-MixedModel-2013, DT-DrugApplication-2015}, Donatelli and Trivisa establish the global existence of weak solutions to a nonlinear system modeling tumor growth in a general moving domain $\Omega_t \subset \R^3$ without any symmetry assumption and for finite large initial data. In that context, the nonliner system is governed by  transport equations \eqref{dP}-\eqref{dW}  for the evolution of cancerous cells, whereas the evolution of the velocity field $\vc{v}$ of the tumor growth is given, 
 by the Brinkman regularization of the Darcy Law, namely
$$
\Grad_x \sigma = - \frac{\tilde{\mu}}{K} \vc{v} + \mu \Delta \vc{v}.
$$
In the present article, we establish the global existence of  weak solutions to the nonlinear system \eqref{S}
\begin{equation}
\begin{cases}
\displaystyle{\Grad_x\sigma = - \frac{\tilde{\mu}}{K} \vc{v}},\\\\
\displaystyle{ \frac{\partial P}{\partial t} + \Div (P \vc{v}) = \vc{G_P}}, \\\\
 \displaystyle{  \frac{\partial Q}{\partial t} + \Div (Q \vc{v}) = \vc{G_Q}}, \\\\
   \displaystyle{ \frac{\partial D}{\partial t} + \Div (D \vc{v}) = \vc{G_D}}, \\\\
 \displaystyle{  \frac{\partial C}{\partial t} = \nu_1 \Delta C -  \left(K_1 K_P CP + K_2 K_Q(\bar{C}-C)Q\right)C},\\\\
 \displaystyle{ \frac{\partial W}{\partial t} = \nu_2 \Delta W -  \left(\mu_1 G_1(W) P + \mu_2 G_2(W) Q \right)W}.
 \end{cases}
\tag{{\bf S}}
 \label{S}
 \end{equation}
 on time dependent domains supplemented with the boundary conditions \eqref{BC1}, \eqref{BC2},  \eqref{BC3} and the initial data \eqref{IC}, by 
establishing rigorously  the vanishing viscosity limit  $\mu \to 0$ for the following system,
\begin{equation}\tag{${\bf S_\mu}$}
\begin{cases}
\displaystyle{\Grad_x \sigma_{\mu} = - \frac{\tilde{\mu}}{K} \vc{v} _{\mu}+ \mu \Delta \vc{v}_{\mu}},\\\\
 \displaystyle{\frac{\partial P_{\mu}}{\partial t} + \Div (P_{\mu} \vc{v}_{\mu}) = \vc{G_{P_{\mu}}}}, \\\\
  \displaystyle{\frac{\partial Q_{\mu}}{\partial t} + \Div (Q_{\mu} \vc{v}_{\mu}) = \vc{G_{Q_{\mu}}}}, \\\\
  \displaystyle{ \frac{\partial D_{\mu}}{\partial t} + \Div (D_{\mu} \vc{v}_{\mu}) = \vc{G_{D_{\mu}}}}, \\\\
 \displaystyle{ \frac{\partial C_{\mu}}{\partial t} = \nu_1 \Delta C_{\mu} -  \left(K_1 K_P C_{\mu}P_{\mu} + K_2 K_Q(\bar{C}-C_{\mu})Q_{\mu}\right)C_{\mu}},\\\\
\displaystyle{ \frac{\partial W_{\mu}}{\partial t} = \nu_2 \Delta W_{\mu} -  \left(\mu_1 G_1(W_{\mu}) P + \mu_2 G_2(W_{\mu}) Q_{\mu} \right)W_{\mu}}.
 \end{cases}
 \label{Smu}
 \end{equation}
  with the aid of a series of delicate estimates that enable us to treat the vanishing viscosity limit within the time-dependent kinematic boundary. The global existence of weak solutions to \eqref{S}  is established for general solutions, that is no symmetry assumption is required and for large initial data.

\subsection{General strategy}
The main ingredients of our strategy can be formulated as follows:

\begin{enumerate}[\quad $\bullet$]
\item Starting from the nonlinear system $\eqref{Smu}$ the procedure, outlined in Section \ref{S2} below,
provides a global weak solution $$\{P_\mu, Q_\mu, D_\mu, \vc{v}_{\mu}, C_{\mu}, W_{\mu}\}.$$ The next step of the investigation involves the derivation of delicate a priori 
bounds (uniform in $\mu$) within the time dependent kinematic boundary. 
In this part, the condition \eqref{incomp-bc} imposed on the boundary behavior  is critical.

\item  The  uniform bounds in $\mu$ will allow us to establish the necessary compactness in order to pass into the limit 
$\mu \to 0$ obtaining the global existence of the solutions of the original problem  $\eqref{S}$.
An important tool in the analysis is the use of the extension operator for Sobolev spaces, $E:W^{1,2}(\Omega_{\tau})\longrightarrow W^{1,2}(\R^{3})$, which is uniformly bounded with respect to $t\in [0,T].$ This operator allow us to deal with the moving domain $\Omega_{\tau}$ in the following sense: since the  limiting process takes place in a moving domain $\Omega_{\tau}$ it will be easier to perform the limit, if we  extend $\vc{v}_{\mu}$, $C_{\mu}$ and $W_{\mu}$, $P_{\mu}$,  $Q_{\mu}$, $D_{\mu}$ on the whole domain $\R^{3}$ by  setting them equal to zero outside the tumor domain. Then, since the domain $\Omega_{\tau}$ is regular at each time  the  extension operator  $E:W^{1,2}(\Omega_{\tau})\longrightarrow W^{1,2}(\R^{3})$ can be of use.

\end{enumerate}

\subsection{Outline}
The paper is organized as follows: Section \ref{S1} presents the motivation, modeling  and   introduces the necessary preliminary material.  Section \ref{S2}  provides  weak formulation of the problem \eqref{S} and states the main result. Section \ref{S3} presents an outline of the global existence of weak solutions of the nonlinear system \eqref{Smu}.
In Section \ref{S4}  we present delicate a priori  bounds  which yield the necessary compactness that is needed in order to perform rigorously the singular limit. In Section \ref{S5} the rigorous limit   $\mu \to 0$ is established and we complete the proof of our Main Theorem \ref{T2.2}.

\section{Weak formulation and main results}\label{S2}
In this section we present the notion of weak solutions to the nonlinear system $\eqref{S}$. 
\subsection{Weak solutions}
\begin{definition}\label{D2.1}
 We say that $(P, Q, D, \vc{v}, C, W)$ is a weak solution of problem $\eqref{S}$ supplemented with boundary data satisfying
(\ref{BC1})-(\ref{BC3})  and initial data $(P_0,  Q_0, D_0, C_0, W_0)$
satisfying (\ref{IC}) provided that the following hold:
\vspace{0.1in}

$\bullet$ $(P,Q, D) \ge 0$ represents a weak solution of \eqref{dP}-\eqref{dQ}-\eqref{dD} on $(0,\infty)\times\Omega_{\tau}$, i.e., for any test function $\varphi \in C^{\infty}_c (([0,T)\times \mathbb{R}^3), T>0$ 
the  following integral relations hold
\smallskip


\begin{equation} 
\left.
\begin{array}{l}
\displaystyle{\int_{\Omega_{\tau}}  P \varphi(\tau,\cdot) \, dx  - \int_{\Omega_0}  P_0 \varphi(0,\cdot)dx = }\\
\hspace{1.5cm}\displaystyle{\int_0^{\tau} \!\!\int_{\Omega_t} \left( P \partial_t \varphi + P \vc{v} \cdot \Grad_x \varphi + \vc{G_P}  \varphi(t, \cdot) \right) dx dt}, \\ \\
\displaystyle{\int_{\Omega_{\tau}}  Q \varphi(\tau,\cdot) \, dx - \int_{\Omega_0}  Q_0 \varphi(0,\cdot)dx}  = 
 \\
\hspace{1.5cm}\displaystyle{\int_0^{\tau} \!\!\int_{\Omega_t} \left( Q \partial_t \varphi + P \vc{v} \cdot \Grad_x \varphi + \vc{G_Q}  \varphi(t, \cdot) \right) dx dt,}\\ \\
 \displaystyle{\int_{\Omega_{\tau}} D \varphi(\tau,\cdot) \, dx  - \int_{\Omega_0}  D_0 \varphi(0,\cdot)dx  =} \\
 \hspace{1.5cm}\displaystyle{\int_0^{\tau} \!\!\int_{\Omega_t} \left( D \partial_t \varphi + D \vc{v} \cdot \Grad_x \varphi + \vc{G_D}  \varphi(t, \cdot) \right) dx dt}.
\end{array}
\right\}
 \label{w-Da}
\end{equation}
In particular, 
$$P \in L^p([0,T]; \Omega_{\tau}), \,\, Q \in L^p([0,T]; \Omega_{\tau}), \,\,D \in L^p([0,T]; \Omega_{\tau}) \,\, \mbox{for all}\,\, p \ge 1. $$ 

We remark that in  the weak formulation, it is convenient that the equations \eqref{dP}-\eqref{dD} hold in the whole space $\mathbb{R}^3$ provided that the densities $(P,Q,D)$ are extended to be zero outside the tumor domain.
\smallskip

$\bullet$ Darcy's equation \eqref{pressure2} holds in the sense of distributions, i.e., for any test function 
$\vc{\varphi} \in C^{\infty}_c(\mathbb{R}^3; \mathbb{R}^3)$ satisfying 
$$ \vc{\varphi} \cdot  \vc{n}|_{\Gamma_{\tau}} = 0\,\, \mbox{for any}\,\, \tau \in [0,T],$$ 
the following integral relation holds
\begin{equation}
\int_{\Omega_\tau} \sigma \Div \vc{\varphi} \, dx - \frac{\tilde{\mu}}{K} \vc{v} \vc{\varphi}  dx=0.
\label{w-pressure2}
\end{equation}

All quantities in \eqref{w-pressure2} are required to be integrable, so in particular, 
$$\vc{v} \in W^{1,2}(\mathbb{R}^3;\mathbb{R}^3),$$
and
$$ (\vc{v - V}) \cdot \vc{n}(\tau, \cdot)|_{\Gamma_{\tau}}=0\,\, \mbox{for a.a.}\,\, \tau \in [0,T].$$

\smallskip

$\bullet$ $C \geq 0$ is a weak solution of \eqref{dC}, i.e.,  for any test function $\varphi \in C^{\infty}_c ([0,T)\times \mathbb{R}^3), T>0$ 
the  following integral relations hold

\[
\int_{\Omega_{\tau}}  C \varphi(\tau,\cdot) \, dx - \int_{\Omega_0}  C_0 \varphi(0,\cdot)dx  = \int_0^{\tau} \!\!\int_{\Omega_{t}}  C \partial_t \varphi dx dt   -
\]
\[
\int_{0}^{\tau} \!\!\int_{\Omega_t} \nu_1  \Grad_x C\cdot \Grad_x \varphi dx dt 
- \int_0^{\tau} \!\!\int_{\Omega_t}  \left(K_1 K_P CP + K_2 K_Q(\bar{C}-C)Q\right)  C  \varphi dx dt. 
\]
\smallskip

$\bullet$ $W \geq 0$ is a weak solution of \eqref{dW}, i.e.,  for any test function $\varphi \in C^{\infty}_c ([0,T)\times \mathbb{R}^3), T>0$ 
the  following integral relations hold

\[
\int_{\Omega_{\tau}}  W \varphi(\tau,\cdot) \, dx - \int_{\Omega_0}  W_0 \varphi(0,\cdot)dx  = \int_0^{\tau} \!\!\int_{\Omega_{t}}  W \partial_t \varphi dx dt   -
\]
\[
\int_{0}^{\tau} \!\!\int_{\Omega_t} \nu_2  \Grad_x W \cdot \Grad_x \varphi dx dt 
- \int_0^{\tau} \!\!\int_{\Omega_t}   \left(\mu_1 G_1(W) P + \mu_2 G_2(W) Q \right) Wdx dt. 
\]
\end{definition}

The main result of the article now follows. 

\begin{theorem}\label{T2.2}
Let $\Omega_0 \subset \mathbb{R}^3$ be a bounded domain of class $C^{2+\nu}.$ 
Assume that the vector field $\vc{V}$ belongs to the class
$$\vc{V} \in C^1([0,T]; C^3_c(\mathbb{R}^3; \mathbb{R}^3)),\,\,\, \Div_x \vc{V}(\tau, \cdot) =0 \,\, \mbox{for all} \,\,\, \tau \in [0, T].$$
  Let the initial data satisfy
 $$P_0 \in L^{p}(\mathbb{R}^3), \,\, Q_0 \in L^{p}(\mathbb{R}^3),\,\, D_0 \in L^{p}(\mathbb{R}^3),\,\, \mbox{for all}\,\, p\ge 1$$
 and 
$$  C_0 \in L^{2}(\mathbb{R}^3)\cap L^{\infty}(\mathbb{R}^3),\,\, W_0 \in L^{2}(\mathbb{R}^3)\cap L^{\infty}(\mathbb{R}^3), $$ 
$$with \,\, (P_0, Q_0, D_0, C_{0}, W_0) \ge 0, \,\,\,  (P_0, Q_0, D_0, C_0, W_0) \not\equiv 0,$$
$$ P_0+Q_0+D_0=\varrho_{f},\quad  (P_0, Q_0, D_0, C_0, W_0)|_{\mathbb{R}^3 \setminus \Omega_0} =0. $$
Then the problem  \eqref{S} with initial data \eqref{IC}  and boundary data \eqref{BC1}-\eqref{BC3} admits a weak solution in the sense specified in Definition \ref{D2.1}.
 \end{theorem}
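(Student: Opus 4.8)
The plan is to realize the weak solution of \eqref{S} as the vanishing viscosity limit $\mu\to 0$ of the Brinkman-regularized system \eqref{Smu}. First I would invoke the construction outlined in Section \ref{S3} (in the spirit of \cite{DT-MixedModel-2013, DT-DrugApplication-2015}) to produce, for each fixed $\mu>0$, a global weak solution $(P_\mu, Q_\mu, D_\mu, \vc{v}_\mu, C_\mu, W_\mu)$ of \eqref{Smu} on the moving domain $\Omega_\tau$, now carrying the additional regularity $\vc{v}_\mu\in L^2(0,T;W^{1,2})$ supplied by the elliptic term $\mu\Delta\vc{v}_\mu$. All unknowns are extended by zero outside the tumor, and throughout the argument the $C^{2+\nu}$ regularity of $\Omega_0$ together with the volume-preserving flow \eqref{incomp-bc} furnishes a family of extension operators $E\colon W^{1,2}(\Omega_\tau)\to W^{1,2}(\R^3)$ bounded uniformly in $\tau\in[0,T]$; this is the device that reduces the moving-domain analysis to fixed-space compactness.

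The core of the proof is a set of a priori bounds uniform in $\mu$. For the parabolic unknowns I would run a maximum principle argument on \eqref{dC}-\eqref{dW}, using the homogeneous data \eqref{BC3} and nonnegativity of $G_1,G_2$, to obtain $0\le C_\mu\le\bar C$ and $0\le W_\mu\le\norm{W_0}_{L^\infty}$, together with energy estimates giving $C_\mu,W_\mu$ bounded in $L^\infty(0,T;L^2)\cap L^2(0,T;W^{1,2})$ independently of $\mu$, the gradients being controlled on $\R^3$ through the uniform extension operator. For the cell densities I would exploit the structural identity $P_\mu+Q_\mu+D_\mu=\vr_f$ from \eqref{density}, positivity, and $L^p$-testing of \eqref{dP}-\eqref{dD}, yielding bounds for $P_\mu,Q_\mu,D_\mu$ in $L^\infty(0,T;L^p)$ for every $p$ by a Gronwall argument in which the source terms $\vc{G_P},\vc{G_Q},\vc{G_D}$ are absorbed using the $L^\infty$ bounds on $C_\mu,W_\mu$. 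For the velocity, testing the regularized Darcy equation by $\vc{v}_\mu-\vc{V}$ (admissible by \eqref{BC1} and \eqref{incomp-bc}) produces a uniform bound on $\vc{v}_\mu$ in $L^2$ together with $\sqrt{\mu}\,\nabla\vc{v}_\mu$ bounded in $L^2$, so that the viscous term $\mu\Delta\vc{v}_\mu$ tends to zero weakly; here the prescribed divergence $\vr_f\Div\vc{v}_\mu=K_B C_\mu P_\mu-K_R D_\mu$ from \eqref{divcon} is the quantity that must be kept under control.

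With these bounds I would extract weakly convergent subsequences and upgrade convergence where necessary. By the Aubin--Lions lemma the parabolic unknowns $C_\mu,W_\mu$ converge strongly in $L^2(\Dom)$, the time-compactness coming directly from \eqref{dC}-\eqref{dW}; the densities $P_\mu,Q_\mu,D_\mu$ converge only weakly-$\star$. The delicate point, which I expect to be the main obstacle, is the passage to the limit in the convective products $P_\mu\vc{v}_\mu$, $Q_\mu\vc{v}_\mu$, $D_\mu\vc{v}_\mu$: these are products of two sequences that a priori converge only weakly, and the usual $W^{1,2}$ velocity bound degenerates as $\mu\to 0$. My plan to overcome this is to recover strong convergence of $\vc{v}_\mu$ through the elliptic structure of \eqref{Smu}: using a Helmholtz-type decomposition, the regularized Darcy equation controls the velocity potential through a Poisson problem whose right-hand side is governed by the density and concentration bounds above, giving a spatial regularity for $\vc{v}_\mu$ that survives the limit and, combined with the time-regularity inherited from \eqref{divcon}, yields $\vc{v}_\mu\to\vc{v}$ strongly in $L^2(\Dom)$. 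Strong velocity convergence then resolves each convective term as a weak $\times$ strong product, identifying the limit as $P\vc{v}$, and likewise handles the constraint \eqref{divcon} in the limit.

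The remaining nonlinearities are then routine. The reaction terms of \eqref{dC}-\eqref{dW} and the source terms $\vc{G_P},\vc{G_Q},\vc{G_D}$ are continuous in $(C,W)$ and affine in the densities, so the strong convergence of $C_\mu,W_\mu$ (hence of $G_1(W_\mu),G_2(W_\mu)$ by dominated convergence, using the $L^\infty$ bounds) paired with the weak convergence of $P_\mu,Q_\mu,D_\mu$ identifies every limiting product. Finally I would verify that the limit $(P,Q,D,\vc{v},C,W)$ satisfies the integral identities of Definition \ref{D2.1}, that nonnegativity and the constraint $P+Q+D=\vr_f$ are preserved under the limit (the former as weak limits of nonnegative functions, the latter by linearity), and that the kinematic condition \eqref{BC1} on the moving boundary $\Gamma_\tau$ passes to the limit, thereby completing the proof in Section \ref{S5}.
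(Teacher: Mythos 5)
Your overall architecture coincides with the paper's: existence for \eqref{Smu} quoted from \cite{DT-DrugApplication-2015}, uniform-in-$\mu$ bounds (maximum principle for $C_\mu,W_\mu$, the constraint $P_\mu+Q_\mu+D_\mu=\vr_f$ giving the $L^\infty$ and $L^p$ density bounds, energy estimates for the parabolic equations, a velocity estimate from testing the Brinkman equation), zero-extension together with the uniformly bounded extension operator $E:W^{1,2}(\Omega_\tau)\to W^{1,2}(\R^3)$, and a final limit passage. Where you genuinely part ways with the paper is the key compactness step for the products $P_\mu\vc{v}_\mu$, $Q_\mu\vc{v}_\mu$, $D_\mu\vc{v}_\mu$. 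You start from the premise that the $W^{1,2}$ bound on $\vc{v}_\mu$ degenerates as $\mu\to 0$ (only $\sqrt{\mu}\,\Grad_x\vc{v}_\mu$ bounded in $L^2$), and to compensate you propose \emph{strong} $L^2$ convergence of $\vc{v}_\mu$ via a Helmholtz decomposition, elliptic regularity and Aubin--Lions, concluding by a weak~$\times$~strong argument. The paper shows instead that the gradient bound does \emph{not} degenerate: since $\rho_f\Div\vc{v}_\mu=K_BC_\mu P_\mu-K_RD_\mu$ is bounded in every $L^p$ by \eqref{u2}, regularity theory for the divergence equation (see \cite{Sohr-2001}, \cite{Novotny-Stras-2004}) yields $\|\Grad_x\vc{v}_\mu\|_{L^p_x}\le c\|\vc{G}\|_{L^p_x}$ uniformly in $\mu$, i.e.\ estimate \eqref{bgradv}; hence $E\vc{v}_\mu\weak\vc{v}$ weakly in $W^{1,2}$ as in \eqref{cv}, with no degeneration, and the products are handled by the classical weak--weak mechanism: the transport equations $\eqref{Smu}_{2}$--$\eqref{Smu}_{4}$ give time-compactness of the densities in $C_{\text{weak}}([T_1,T_2];L^{2q/(q+2)}(K))$, which paired with the weak spatial convergence of the velocity (compact Sobolev embedding) identifies the limits of the products, as in \eqref{cPQDCv}--\eqref{cPQDCv2}. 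Note that your Helmholtz idea rests on the same structural observation---ellipticity transfers the uniform $L^p$ control of $\Div\vc{v}_\mu$ to the full gradient---so your route is viable in principle; but by insisting on strong velocity convergence you take on extra obligations the paper avoids: you must control the curl part of $\vc{v}_\mu$ (taking the curl of the Brinkman equation leaves boundary contributions on the moving boundary $\Gamma_\tau$ under the Navier condition \eqref{BC2}), and you must establish time-regularity of $\vc{v}_\mu$ itself, not merely of $\Div\vc{v}_\mu$ as supplied by \eqref{divcon}. The paper's weaker statement, which shifts the time-compactness burden onto the densities where it comes for free from their transport equations, is the more economical argument and is the one actually carried out in Sections \ref{S4}--\ref{S5}.
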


\section{Global Existence of Weak Solutions to the system ${\bf S_\mu}$ }\label{S3}
As already said in Section \ref{S1}, we will prove the Theorem \ref{T2.2} by performing the vanishing viscosity limit of the system \eqref{Smu}.
Therefore we consider the system \eqref{Smu} endowed with the following initial data
\begin{equation}
\begin{cases}
& \!\!\!\!  P_{\mu}(0, \cdot) = P_{\mu 0}=P_0, \,\, Q_{\mu}(0, \cdot) = Q_{\mu 0}=Q_0, \,\, D_{\mu}(0, \cdot) =D_{\mu 0}= D_0, \\
& \!\!\!\!  C_{\mu}(0, \cdot) =C_{\mu 0}=  C_{0}\leq \bar{C}, \,\, W_{\mu}(0, \cdot) =W_{\mu 0}=  W_{0} \,\,\, \text{in} \,\,\, \Omega_0.
\end{cases}
 \label{ICmu}
\end{equation}
and the following boundary data:
\begin{equation}
 (\vc{v}_{\mu} - \vc{V}) \cdot \vc{n}|_{\Gamma_{\tau}} = 0, \,\, \mbox{for any}\,\,\, \tau \ge 0. \label{BC1mu}
 \end{equation}
\begin{equation}
[\mathbb{S} \vc{n}]_{\mbox{tan}}|_{\Gamma_{\tau}} = 0, \label{BC2mu}
\end{equation}
\begin{equation}
C_{\mu}(x,t)|_{\Gamma_t} = 0,\,\,\, W_{\mu}(x,t)|_{\Gamma_t} = 0 \label{BC3mu}.
\end{equation} 

In this section,  we discuss briefly for completeness the global existence of weak solutions to the nonlinear system \eqref{Smu} presented in \cite{DT-DrugApplication-2015}.
The following result established in 
\cite{DT-DrugApplication-2015} will be essential in the sequel.
\begin{theorem}\label{T-P1}
Let $\Omega_0 \subset \mathbb{R}^3$ be a bounded domain of class $C^{2+\nu}$  and let 
$$\vc{V} \in C^1([0,T]; C^3_c(\mathbb{R}^3; \mathbb{R}^3))$$
 be given. Let the initial data satisfy
 $$P_{\mu 0}\in L^{p}(\mathbb{R}^3), \,\, Q_{\mu 0}\in L^{p}(\mathbb{R}^3),\,\, D_{\mu 0}\in L^{p}(\mathbb{R}^3),\,\, \mbox{for all}\,\, p\ge 1$$
 and 
$$  C_{\mu 0} \in L^{2}(\mathbb{R}^3)\cap L^{\infty}(\mathbb{R}^3),\,\, W_{\mu 0} \in L^{2}(\mathbb{R}^3)\cap L^{\infty}(\mathbb{R}^3), $$ 
$$with \,\, (P_{\mu 0}, Q_{\mu 0}, D_{\mu 0}, C_{\mu 0}, W_{\mu 0}) \ge 0, \,\,\,  (P_{\mu 0}, Q_{\mu 0}, D_{\mu 0}, C_{\mu 0}, W_{\mu 0}) \not\equiv 0,$$
$$ P_{\mu 0}+Q_{\mu 0}+D_{\mu 0}=\varrho_{f},\quad  (P_{\mu 0}, Q_{\mu 0}, D_{\mu 0}, C_{\mu 0}, W_{\mu 0})|_{\mathbb{R}^3 \setminus \Omega_0} =0. $$
Then the problem  \eqref{Smu} with initial data \eqref{ICmu}  and boundary data \eqref{BC1mu}-\eqref{BC3mu} admits a weak solution satisfying the constraint
\begin{equation}
P_{\mu} +Q_{\mu}+D_{\mu}= \vr_{f}
\label{b1}
\end{equation}
\end{theorem}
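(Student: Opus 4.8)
The plan is to obtain the solution of $\eqref{Smu}$ as the limit of a penalized problem posed on a fixed container, and to build the penalized solutions by a decoupling fixed-point scheme whose crucial feature is that, for $\mu>0$, the Brinkman operator regularizes the velocity to be spatially Lipschitz. First I would fix $T>0$ and a ball $B\subset\R^3$ with $\overline{\Omega_\tau}\subset B$ for all $\tau\in[0,T]$, extend all unknowns by zero (resp.\ by $\vc V$ for the velocity) outside $\Omega_\tau$, and replace the friction coefficient $\tilde\mu/K$ by $\tilde\mu/K+\tfrac1\epsilon\,\mathbf 1_{B\setminus\Omega_\tau}$ together with a forcing $\tfrac1\epsilon\mathbf 1_{B\setminus\Omega_\tau}\vc V$ in the Brinkman equation. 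As $\epsilon\to0$ this penalization forces $\vc v_\epsilon\to\vc V$ outside $\Omega_\tau$ and, in the limit, reproduces the impermeability condition \eqref{BC1mu}; the Dirichlet data \eqref{BC3mu} for $C,W$ are enforced analogously.

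On the fixed container I would solve the coupled system by iteration, decoupling it into three blocks. Given the cell densities and the nutrient, the constraint \eqref{divcon} prescribes
\[
\Div \vc v=\frac1{\varrho_f}\bigl(K_B C P-K_R D\bigr),
\]
and, together with the Brinkman equation, this is a generalized Stokes--Brinkman problem for $(\vc v,\sigma)$ with prescribed divergence and right-hand side controlled by $CP$ and $D$. Here lies the decisive regularity gain: since $P,D\in L^p$ for every $p$ and $0\le C\le\bar C$, elliptic regularity for the Brinkman system (using $\mu>0$) yields $\vc v\in W^{2,p}$ for all $p<\infty$, hence $\vc v$ Lipschitz in space. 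With such a velocity the transport equations \eqref{dP}--\eqref{dD} are solved along the classical flow of $\vc v$; nonnegativity and the support property are preserved, and Gronwall estimates against the smooth source terms $\vc{G_P},\vc{G_Q},\vc{G_D}$ give uniform $L^p$ bounds for all $p$, \emph{global in time}. The nutrient and drug equations are linear non-degenerate parabolic equations in the Lipschitz velocity field, solved by standard theory, with the maximum principle giving $0\le C\le\bar C$, $W\ge0$, and $L^2\cap L^\infty$ bounds. A Schauder fixed point on a short interval, extended by the uniform bounds, closes the scheme.

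Next I would collect the a priori estimates uniform in the penalization parameter $\epsilon$: the energy identity for the Brinkman equation controls $\norm{\vc v_\epsilon}_{W^{1,2}(B)}$ and the penalization defect $\tfrac1\epsilon\norm{\mathbf 1_{B\setminus\Omega_\tau}(\vc v_\epsilon-\vc V)}_{L^2}^2$, while the density and parabolic bounds are as above. Compactness then comes from Aubin--Lions for $C_\epsilon,W_\epsilon$ and, for the densities, from the convergence of the associated (Lipschitz) flows, supplemented if needed by DiPerna--Lions renormalization; these suffice to pass to the limit in the nonlinear products $CP$, $G_1(W)P$, and so on. Sending $\epsilon\to0$ recovers a weak solution of $\eqref{Smu}$ on the moving domain together with the boundary conditions \eqref{BC1mu}--\eqref{BC3mu}. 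Finally, the constraint \eqref{b1} is verified directly: summing the three transport equations and using that $\Div\vc v_\mu$ was prescribed exactly as $\varrho_f^{-1}(\vc{G_P}+\vc{G_Q}+\vc{G_D})$ shows that $S:=P_\mu+Q_\mu+D_\mu-\varrho_f$ satisfies $\partial_t S+\Div(S\vc v_\mu)=0$ with $S(0,\cdot)=0$, whence $S\equiv0$ by uniqueness for the transport equation. The step I expect to be the main obstacle is the self-consistent closure of the coupling through the divergence constraint --- the divergence of $\vc v$ is dictated by the very densities being transported --- together with the passage to the limit in the penalization, where one must show simultaneously that the limit velocity satisfies $(\vc v-\vc V)\cdot\vc n=0$ on $\Gamma_\tau$ and that no mass leaks across the moving boundary.
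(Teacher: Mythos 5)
Your overall architecture (fixed container, penalization, decoupled fixed point, then $\epsilon\to 0$) is close in spirit to the paper's two-parameter penalization scheme, but two of your key steps fail as stated. The first and most serious is the claimed ``decisive regularity gain.'' In your scheme the divergence of the velocity is prescribed: $\Div\vc{v}=\varrho_f^{-1}(K_B CP-K_R D)=:g$. If $\vc{v}$ belonged to $W^{2,p}$, then $\Div\vc{v}=g$ would have to belong to $W^{1,p}$; but $g$ is built from the transported densities $P,D$, which are merely bounded measurable functions (the initial data are only in $L^p$, and a transport equation does not regularize), so $g\in L^\infty\cap L^p$ while $g\notin W^{1,p}$. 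Hence elliptic theory for the Brinkman system with prescribed divergence yields only $\vc{v}\in W^{1,p}$ --- exactly the estimate \eqref{bgradv} that the paper proves --- and $\vc{v}$ is H\"older continuous of any exponent $<1$ but not Lipschitz. The classical flow of $\vc{v}$, on which your entire construction rests (solving \eqref{dP}--\eqref{dD} along characteristics, preservation of nonnegativity and of the support, the Gronwall bounds, and the uniqueness argument giving $S\equiv 0$, i.e.\ \eqref{b1}), is therefore not available. All of this can in principle be salvaged by DiPerna--Lions renormalization theory ($\vc{v}\in W^{1,p}$ with $\Div\vc{v}\in L^\infty$ suffices for existence and uniqueness of renormalized solutions), which you invoke only as an optional supplement for compactness; but then the fixed-point scheme must be redesigned around weak rather than characteristic solutions, and this is essentially why the construction cited by the paper proceeds through a Faedo--Galerkin approximation of the coupled system instead of a characteristics-based iteration.

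The second gap concerns the boundary conditions. Your bulk friction penalization $\frac{1}{\epsilon}\mathbf{1}_{B\setminus\Omega_\tau}(\vc{v}-\vc{V})$, with the viscosity $\mu>0$ kept constant on the whole container $B$, gives a bound on $\vc{v}_\epsilon$ in $W^{1,2}(B)$ uniform in $\epsilon$, so the limit velocity lies in $W^{1,2}(B)$ and equals $\vc{V}$ a.e.\ outside $\Omega_\tau$; single-valuedness of Sobolev traces then forces $\vc{v}=\vc{V}$ on $\Gamma_\tau$, i.e.\ the full no-slip condition, rather than the impermeability condition \eqref{BC1mu} together with the Navier slip condition \eqref{BC2mu} required by Theorem \ref{T-P1}. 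Since $\mu>0$ is precisely the point of \eqref{Smu}, the tangential condition is not negotiable: a no-slip limit does not satisfy the weak formulation associated with $[\mathbb{S}\vc{n}]_{\mathrm{tan}}=0$ (integrating $\mu\Delta\vc{v}$ by parts against test functions with $\vc{\varphi}\cdot\vc{n}=0$ leaves boundary terms that need not vanish). This is exactly why the paper, following Courant, Stokes--Carey and Feireisl et al., penalizes only the normal component through the surface integral $\frac{1}{\varepsilon}\int_{\Gamma_t}(\vc{v}-\vc{V})\cdot\vc{n}\,\vc{\varphi}\cdot\vc{n}\,dS_x$ and, in a second layer of approximation indexed by $\omega$, lets the viscosity $\mu_\omega$ and the diffusivities degenerate outside the tumor region, so that the slip condition survives both limits $\varepsilon\to 0$ and $\omega\to 0$.
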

\begin{proof}
We present here the main ingredients of the proof of the Theorem \ref{T-P1} presented in \cite{DT-DrugApplication-2015} (in order to simplify the notations we drop the index $\mu$). 
\begin{enumerate}[\quad$\bullet$]
\item Our approach involves the construction of a suitable approximating scheme which  relies on the {\em penalization} of the boundary behavior, diffusion and viscosity in the weak formulation. The approximating scheme employs the variables $\varepsilon$ (for the penalization of the boundary behavior) and $\omega$  (for the penalization of the diffusion and viscosity).
\begin{enumerate}
\item[{\bf a.}] In the center of the approach lie the so-called 
{\em generalized penalty methods}  typically suitable for treating partial slip, free surface, contact and related boundary conditions in viscous flow analysis and simulations.
This form of boundary penalty approximation appeared  by Courant in  \cite{Courant-1956}, in the context of  slip conditions for stationary incompressible fluids by Stokes and Carrey in \cite{StokesCarey-2011}, and more recently in a series of articles (cf. \cite{DT-MixedModel-2013}, \cite{DT-VariableDensity-2014}, \cite{DT-DrugApplication-2015},\cite{FeireislNS-2011}, \cite{FeireislKNNS-2013}). 

More specifically, the boundary condition \eqref{BC1} is treated as a {\em weakly enforced constraint}, in the sense that  the variational (weak) formulation of the Brinkman equation is supplemented by a singular forcing term
\begin{equation*} 
\frac{1}{\varepsilon} \int_{\Gamma_t} (\vc{v}-\vc{V}) \cdot {\bf n} \vc{\varphi} \cdot \vc{n} dS_x,\,\,\, \varepsilon>0\,\, \mbox{small}, \label{penalty}
\end{equation*}
penalizing the normal component of the velocity on the boundary of the tumor domain.

\item[{\bf b.}] A {\em variable} shear viscosity coefficient $\mu = \mu_{\omega},$ as well as a {\em variable} diffusions $\nu_i={\nu_i}_{\omega}, i=1,2 $ with $\mu_{\omega}, {\nu_i}_{\omega}$  are introduced, with the property that they vanish outside the tumor domain and remain positive within the tumor domain. The addition, of the variable $\omega$  allows us the treat the moving domain.
\end{enumerate}

\item 
Keeping $\varepsilon$ and $\omega$ fixed, we solve the modified problem in a (bounded) reference domain $B \subset \mathbb{R}^3$ chosen in such way that 
$$\bar{\Omega}_{\tau} \subset B \,\, \mbox{for any}\,\, \tau \ge 0$$  
with the aid of a Faedo-Galerkin approximation. We refer the reader to \cite{DT-DrugApplication-2015} for the details.
The solution $\{P_{\omega,\e},\  Q_{\omega,\e},\  D_{\omega,\e}, \vc{v}_{\omega,\e}\}$ constructed satisfy the following uniform bounds:

\begin{equation}
0\leq P_{\omega,\e},  Q_{\omega,\e},  D_{\omega,\e}\leq \vr_{f}\quad \text{in $[0,T]\times B$},
\label{u1a}
\end{equation}
this entails that for any $p\geq 1$
\begin{equation}
P_{\omega,\e},\  Q_{\omega,\e},\  D_{\omega,\e} \quad \text {are uniformly bounded in $L^{p}([0,T]\times B)$}.
\label{u2a}
\end{equation}
Moreover we have the following uniform bounds for  nutrient $C_{\omega,\e}$, the drug concentration $W_{\omega,\e}$ the velocity $\vc{v}_{\omega,\e}$ and the pressure $\sigma_{\omega,\e}$\begin{equation}
C_{\omega, \e}(x,t)\in L^{\infty}([0,T]\times B).
\label{mpC}
\end{equation}
\begin{equation}
W_{\omega, \e}(x,t)\in L^{\infty}([0,T]\times B).
\label{mpW}
\end{equation}

\begin{equation}
\|C_{\omega,\e}\|_{L^{2}_{t}L^{2}_{x}}+ \| {\nu_{1}}_{\omega} \Grad_{x} C_{\omega,\e}\|_{L^{2}_{t}L^{2}_{x}}\leq c,
\label{bc}
\end{equation}\\
\begin{equation}
\|W_{\omega,\e}\|_{L^{2}_{t}L^{2}_{x}}+  \| {\nu_{2}}_{\omega} \Grad_{x} W_{\omega,\e}\|_{L^{2}_{t}L^{2}_{x}}\leq c,
\label{bw}
\end{equation}\\
where $L^{q}_{t}L^{p}_{x}$ stands for $L^{q}(0,T;L^{2}(B)\!)$.
In addition,

\begin{equation}
\|\sigma_{\omega,\e} \|_{L^{\beta}_{x}}\leq c, \qquad 1<\beta\leq 2
\label{bprea}
\end{equation}

 \begin{equation}
\|\mu_{\omega} \vc{v}_{\omega,\e}\|_{L^{2}_{x}}+\|\mu_{\omega} \Grad_{x}\vc{v}_{\omega,\e}\|_{L^{2}_{x}}\leq c,
\label{bv}
\end{equation}\\
\begin{equation}
\int_{\Gamma_t} |(\vc{v}_{\omega,\e}-\vc{V}) \cdot \vc{n}|^{2}dS\leq c\varepsilon.
\label{bp}
\end{equation}

\item Letting the penalization  $\varepsilon \to 0$ for fixed $\omega > 0 $ we obtain a ``two-phase" model consisting of the {\em tumor region}  and the {\em healthy tissue} separated by impermeable boundary.  We show that the densities vanish  in part of the reference domain, specifically on $((0,T) \times B) \setminus Q_T.$ 
 The main issue is to describe the evolution of the interface $\Gamma_{\tau}.$ To that effect we employ elements from the so-called {\em level set method}.

\item The final  result is obtained by  performing the limit $\omega \to 0.$

\end{enumerate}
\par\smallskip

\end{proof}
\par\smallskip

\par\smallskip
\section{A Priori Estimates}\label{S4}
In this section we collect all the a priori estimates uniform in $\mu$ satisfied by the solutions of the system \eqref{Smu}. 
 Let us mention that, in the sequel, we will denote by $c$ any constant that depends on $\vr_{f}$, $\bar{C}$, $\bar{W}$, the initial data \eqref{IC} and the boundary conditions \eqref{BC2}-\eqref{BC3}.
First of all we observe that because of  the condition \eqref{b1} we get that 
\begin{equation}
0\leq P_{\mu},  Q_{\mu},  D_{\mu}\leq \vr_{f}\quad \text{in $[0,T]\times \Omega_{\tau}$},
\label{u1}
\end{equation}
from which it follows that  for any $p\geq 1$
\begin{equation}
P_{\mu},\  Q_{\mu},\  D_{\mu} \quad \text {are uniformly bounded in $L^{p}([0,T]\times \Omega_{\tau})$}.
\label{u2}
\end{equation}
By a standard application of the maximum principle to the parabolic  equations satisfied by the nutrient $C_{\mu}$ and the drug concentration $W_{\mu}$ we have that
\begin{equation}
\sup_{t\in [0,T]}\|C_{\mu}\|_{L^{\infty}(\Omega_{t})}\leq \bar{C}, \qquad \sup_{t\in [0,T]}\|W_{\mu}\|_{L^{\infty}(\Omega_{t})}\leq \bar{W}
\label{bCW}
\end{equation}
Now,  by multiplying \eqref{dC} by  $C_{\mu}$,  by integrating by parts and by taking into account \eqref{u1}, \eqref{u2}, \eqref{bCW} we get that $C_{\mu}$ satisfies the following energy estimate,
\begin{equation}
\begin{split}
\int_{\Omega_{\tau}}\frac{1}{2}|C_{\mu}|^{2}dx + \nu_{1}\int_{0}^{\tau}\!\!\int_{\Omega_{t}} & |\Grad_{x}C_{\mu}|^{2}dxdt\\
&\leq c\int_{\Omega_{\tau}}|C_{0}|^{2}dx+\int_{0}^{\tau}\!\!\int_{\Omega_{t}}|C_{\mu}|^{2}dxdt, 
\end{split}
\label{u3}
\end{equation}
similarly, taking into account that $G_{1}$ and $G_{2}$ are smooth functions we have also
\begin{equation}
\begin{split}
\int_{\Omega_{\tau}}\frac{1}{2}|W_{\mu}|^{2}dx + \nu_{2}\int_{0}^{\tau}\!\!\int_{\Omega_{t}} & |\Grad_{x}W_{\mu}|^{2}dxdt\\
&\leq c\int_{\Omega_{\tau}}|W_{0}|^{2}dx+\int_{0}^{\tau}\!\!\int_{\Omega_{t}}|W_{\mu}|^{2}dxdt, 
\end{split}
\label{u3w}
\end{equation}
As a consequence of \eqref{u3} and \eqref{u3w} we get the following uniform bounds
\begin{equation}
\int_{0}^{T}\|C_{\mu}\|^{2}_{W^{1,2}(\Omega_{t})}dt\leq c, \qquad \int_{0}^{T}\|W_{\mu}\|^{2}_{W^{1,2}(\Omega_{t})}dt\leq c.
\label{u4}
\end{equation}
Now we focus our attention on the velocity field $\vc{v}_{\mu}$. First we notice that by adding up the equations $\eqref{Smu}_{2}-\eqref{Smu}_{4}$ we have 
\begin{equation}
\rho_{f}\Div{\vc{v}_{\mu}}= K_{B} C_{\mu} P_{\mu}-K_{R}D_{\mu}=\vc{G},
\label{div1}
\end{equation}
where by using \eqref{u2} we have that $\vc{G}\in L^{p}(\Omega_{\tau})$, $p\geq 1$. 
Next, by applying regularity theory concerning the divergence  equation in Sobolev spaces (see Lemma 2.1.1 (a) in \cite{Sohr-2001} or Remark 3.19 in \cite{Novotny-Stras-2004}, for more details see also \cite{DT-MixedModel-2013}) we end up with 
\begin{equation}
\|\Grad_x\vc{v}_{\mu}\|_{L^{p}_{x}}\leq c\|{\bf G}\|_{L^{p}_{x}}, \qquad p>1.
\label{bgradv}
\end{equation}
On the other hand by considering the equation $\eqref{Smu}_{1}$, by taking into account \eqref{div1} and \eqref{bgradv} and by a standard application of elliptic regularity theory (see again \cite{DT-MixedModel-2013}) we conclude with the following uniform bound with respect to $\mu$,
\begin{equation}
\|\sigma_{\mu} \|_{L^{2}_{x}}\leq c.
\label{bpre}
\end{equation}
Now, by using \eqref{div1}, \eqref{bpre} and by multiplying the equation $\eqref{Smu}_{1}$ by $\vc{v}_{\mu}$ and by integrating by parts we have
\begin{equation}
\frac{\tilde{\mu}}{K}\int_{\Omega_{\tau}}|\vc{v}_{\mu}|^{2}dx+\mu\int_{\Omega_{\tau}}|\Grad_x\vc{v}_{\mu}|^{2}dx\leq c.
\label{u5}
\end{equation}
We remark  that the soleindal condition \eqref{incomp-bc} on $\vc{V}$ was essential in order to get the estimates \eqref{u3}, \eqref{u3w}, \eqref{u5}.

\section{Vanishing viscosity limit $\mu\to 0$}\label{S5}
In this section we perform the limit $\mu\to 0$ in order to recover the system \eqref{S}. Since our limiting process takes place in a moving domain $\Omega_{\tau}$ it is more convenient to extend $\vc{v}_{\mu}$, $C_{\mu}$ and $W_{\mu}$, $P_{\mu}$,  $Q_{\mu}$, $D_{\mu}$ on the whole domain $\R^{3}$ by  setting them equal to zero outside the tumor domain. In fact in this way one performs the limiti in a ``time independent domain''. Then, since the domain $\Omega_{\tau}$ is regular at each time we  use the standard extension operator for Sobolev spaces, $E:W^{1,2}(\Omega_{\tau})\longrightarrow W^{1,2}(\R^{3})$, uniformely bounded with respect to $t\in [0,T]$, (for details on the operator E see \cite{AF2003}). 

From the uniform bounds \eqref{u4}, \eqref{u5} we get
\begin{align}
EC_{\mu}\longrightarrow C\qquad  &\text{weakly in $L^{2}(0,T;W^{1,2}(\R^{3}))$}, \label{cc}\\
EW_{\mu}\longrightarrow W\qquad &\text{weakly in $L^{2}(0,T;W^{1,2}(\R^{3}))$}, \label{cw}\\
E\vc{v}_{\mu}\longrightarrow \vc{v} \qquad &\text{weakly in $W^{1,2}(\R^{3})$}. \label{cv}
\end{align}
By taking into account \eqref{u2} and \eqref{u5} we have 
we get 
\begin{equation}
\hspace{-0.55cm}
\left.
\begin{array}{r}
P_{\mu}\vc{v}_{\mu}\rightarrow P\vc{v}\\ \\
Q_{\mu}\vc{v}_{\mu}\rightarrow Q\vc{v}\\ \\
D_{\mu}\vc{v}_{\mu}\rightarrow D\vc{v}
\end{array}
\right\}\  \text{weakly-($\ast$) in} \ L^{\infty}(T_{1},T_{2};L^{2q/q+2}(K)),\  2\leq q< 6,.
\label{cPQDCv}
\end{equation}
where $K\subset\Omega_{\tau}$ is a compact subset.  Moreover, from the equations $\eqref{Smu}_{2}-\eqref{Smu}_{4}$ it follows that

\begin{equation}
\hspace{-0.55cm}
\left.
\begin{array}{r}
P_{\mu}\vc{v}_{\mu}\rightarrow P\vc{v}\\ \\
Q_{\mu}\vc{v}_{\mu}\rightarrow Q\vc{v}\\ \\
D_{\mu}\vc{v}_{\mu}\rightarrow D\vc{v}
\end{array}
\right\}\quad\text{in} \ C_{\text{weak}}([T_{1},T_{2}];L^{2q/q+2}(K)),\  2\leq q< 6.
\label{cPQDCv2}
\end{equation}
Now, by using \eqref{u2}, \eqref{u4}, as before we get also
\begin{equation}
\hspace{-0.55cm}
\left.
\begin{array}{r}
P_{\mu}C_{\mu}\rightarrow PC\\ \\
Q_{\mu}C_{\mu}\rightarrow QC\\ \\
D_{\mu}C_{\mu}\rightarrow DC\\ \\
P_{\mu}W_{\mu}\rightarrow PW\\ \\
Q_{\mu}W_{\mu}\rightarrow QW
\end{array}
\right\}\ \text{weakly-($\ast$) in $L^{\infty}(0,T;L^{2q/q+2}(K)),$ $2\leq q < 6$.}
\label{cC}
\end{equation}
\begin{remark}
Since the compact set $K$ can be chosen arbitrarily close to the the boundary of $Q_{T}$, the above convergences \eqref{cPQDCv}, \eqref{cPQDCv2}, \eqref{cC} take place
in the whole cylinder $Q_{T}$.
\end{remark}
Now, by standard computations, and by taking into account \eqref{u5} we have
\begin{equation*}
\sqrt{\mu}\int_{\Omega_{\tau}}
\sqrt{\mu}\Grad_x \vc{v}_{\omega}  : \Grad_x \vc{\varphi}  dx\longrightarrow 0 
\quad \text{as $\mu\to 0$}.
\end{equation*}
At this point it is straightforward to pass into the limit in the weak formulations of the system \eqref{Smu}   and to conclude the proof of the Theorem \ref{T2.2}.

\section{Acknowlegments}
The work of D.D. was supported  by the Ministry of Education, University and Research (MIUR), Italy under the grant PRIN 2012- Project N. 2012L5WXHJ, \emph{Nonlinear Hyperbolic Partial Differential Equations, Dispersive and Transport Equations: theoretical and applicative aspects}. Ê K.T.  gratefully acknowledges the support  in part by the National Science Foundation under the grant DMS-1211519 and by the Simons Foundation under the Simons Fellows in Mathematics Award 267399.

\end{document}